\theoremstyle{plain} 
\theoremstyle{definition} 
\theoremstyle{remark} 
\newcommand{\ie}{\emph{i.e.}\ }
\numberwithin{equation}{section}
\newtheorem{theorem}[subsection]{Theorem}
\newtheorem*{theorem*}{Theorem}
\newtheorem{definition}[subsection]{Definition}
\newtheorem{decomposition-theorem}[subsection]{Decomposition Theorem}
\newtheorem{proposition-definition}[subsection]{Proposition-Definition}
\newtheorem{lemma}[subsection]{Lemma}
\newtheorem*{lemma*}{Lemma}
\newtheorem{proposition}[subsection]{Proposition}
\newtheorem{remark}[subsection]{Remark}
\newcommand{\reminder}[1]{}
\renewcommand{\mod}{\mathrm{mod}\,}
\newcommand{\proj}{\mathrm{proj}\,}
\newcommand{\ind}{\mathrm{ind}}
\newcommand{\op}{^{op}}
\newcommand{\Z}{\mathbb{Z}}
\newcommand{\mL}{\mathrm{L}}
\newcommand{\Hom}{\mathrm{Hom}}
\newcommand{\Ext}{\mathrm{Ext}}
\newcommand{\cp}{{\mathcal P}}
\newcommand{\ct}{{\mathcal T}}
\renewcommand{\phi}{\varphi}
\renewcommand{\tilde}[1]{\widetilde{#1}}
\date{October 28, 2021, last modified on \today}
\title{Equivalent definitions of the preprojective algebra}
\author{
Chiara Sava}
\email{
sava@karlin.mff.cuni.cz}
\begin{document}

\begin{abstract}
Following the article of C. M. Ringel we introduce preprojective algebras of a Dynkin quiver $Q$ starting from three definitions which, despite concerning completely different algebraic structures, turn out to be equivalent. Our main result is a new version of Ringel's proofs that applies a theorem by Happel and exploits the techniques of homological algebra.
Moreover we show that the  definition of the preprojective algebra given with the usual notion of commutator is equivalent to the definition with the "generalised" commutator. 
\end{abstract}

\maketitle

\tableofcontents
\section{Introduction} 
Let $k$ be an algebraically closed field and fix a Dynkin quiver $Q$. To study the preprojective algebra of $Q$ we can choose between different approaches: the first one concerns a quotient of a $k$-algebra by an ideal and leads to the \emph{combinatorial definition} (\Cref{combinatorial}); the second one concerns the structure of the morphisms in the orbit category and leads to the \emph{homological definition} (\Cref{hodef}); finally, the third one is based on a particular tensor algebra and then leads to the \emph{tensor definition} (\Cref{tedef}). One of the basic results about preprojective algebras shows that these three approaches are equivalent.

In the article \cite{Ringel97thepreprojective}, C. M. Ringel first proves what he calls Theorem A, which states the equivalence between the combinatorial and the tensor definition. Then he proves that the tensor definition is also equivalent to the homological one in Theorem B.

In this article we give an alternative proof of these equivalences following a different strategy. In particular, the result by Happel (\Cref{Happel}) which describes the derived category $\ind D^b(\mod kQ)$ as being isomorphic to the mesh category, provides the link between the underlying structures of the quotient algebra and the morphism space in the orbit category. Then we prove Theorem B with homological algebra techniques using \Cref{keylemma} to switch from a derived tensor product to a tensor product which is no longer derived.

Furthermore, we prove that there is another equivalent way how to define the preprojective algebra related to the combinatorial definition. In particular, the ideal we mentioned above is generated by commutators that are defined in the usual way as $[\alpha, \alpha^*]=\alpha^*\alpha-\alpha\alpha^*$. In \Cref{commutator}
we consider a "generalised" version of them, namely \[[\alpha, \alpha^*]_{q(\alpha)}=\alpha^*\alpha-q(\alpha)\alpha\alpha^*,\] where $q$ is a function from the set of arrows $Q_1$ to the field $k^*=k \setminus \{0\}$. It turns out that the standard combinatorial definition is equivalent to the one given by means of generalised commutators.  
\section*{Acknowledgement}
This paper originated from the author's Master thesis. The author would like to thank Bernhard Keller for his supervision, fundamental support and explaining the key ideas behind this work. The author also thanks Jan Šťovíček for his valuable comments on a draft of the paper and Yilin Wu for interesting discussions.

\section{Combinatorial definition}

Let $k$ be an algebraically closed field and let $Q$ be a Dynkin quiver. We denote the set of vertices by $Q_0$, the set of arrows by $Q_1$ and the two functions, respectively the source and the target of each arrow by  $s,t:Q_1 \to Q_0$.

\begin{definition}\label{combinatorial}
The \emph{doubled quiver} $\overline{Q}$ is obtained from $Q$ by adding for every arrow $\alpha:i \to j$ a new arrow $\alpha^*:j \to i$. The set of new arrows will be denoted by $Q_1^*$.
\end{definition}

We define the following element of the path algebra $k\overline{Q}$:
\[ \rho=\sum_{\alpha \in Q_1} [\alpha, \alpha^*],\]
where the summands $[\alpha, \alpha^*]=\alpha^*\alpha-\alpha\alpha^*$ are the usual commutators. 
\begin{remark}
We can see the combinatorial nature of this element through the following relations
\[ \rho =\sum_{\alpha \in Q_1} [\alpha, \alpha^*]=\sum_{\alpha \in Q_1}\alpha^*\alpha-\alpha\alpha^*=\sum_{i \in Q_0}(\sum_{t(\alpha)=i} \alpha\alpha^* - \sum_{s(\beta)=i} \beta^*\beta) \]
\[\begin{tikzcd}
\bullet \arrow[rd, "\beta", shift left=2] &                                                                                                             & \bullet \arrow[ld, "\alpha"', shift right] \\
                                          & \bullet^i \arrow[ru, "\alpha^*"', shift right=2] \arrow[lu, "\beta^*", shift left] \arrow[d, shift right=2] &                                            \\
                                          & \bullet \arrow[u, shift right]                                                                              &                                           
\end{tikzcd}\]
If we call $\rho_i=\sum_{t(\alpha)=i} \alpha\alpha^* - \sum_{s(\beta)=i} \beta^*\beta$, then we can also note that $\rho_i=e_i\rho e_i$, where $e_i$ is the idempotent element associated to the vertex $i$ of the path algebra. Thus, we have $\rho= \sum_{i\in Q_0} \rho_i=\sum_{i\in Q_0}e_i\rho e_i$ and we can conclude that, if we consider the two-sided ideal generated by $\rho$, then it is the same ideal as the one generated by the $\rho_i$'s: \[\langle \rho \rangle= \langle \rho_i | i \in Q_0 \rangle.\] 
\end{remark}

\begin{definition}[\textbf{Combinatorial Definition}]\label{codef}
The \emph{combinatorial preprojective algebra} of the quiver $Q$ is the quotient \[\Lambda^{co}_Q=k\overline{Q}/\langle \rho \rangle,\] where $\overline{Q}$ is the doubled quiver.
\end{definition}

\begin{remark}
We can generalize the concept of commutator, defining \[[\alpha, \alpha^*]_{q(\alpha)}=\alpha^*\alpha-q(\alpha)\alpha\alpha^*,\] where $q(\alpha)$ is an element of the field $k$. Thus, given an arbitrary function $q:Q_1 \to k$ we can consider \[\Lambda^{co}_{Q,q}=k\overline{Q}/\langle \rho_q \rangle,\quad \text{  where  }\quad \rho_q =\sum_{\alpha \in Q_1} [\alpha, \alpha^*]_{q(\alpha)}.\] 
\end{remark}

The next lemma holds not only in the Dynkin case but for all the quivers whose underlying graph is a tree. Then, we prove it in this more general setting. 
\begin{lemma}\label{commutator}
If the underlying graph of $Q$ is a tree and $q:Q_1 \to k^*$ is a function such that $q(\alpha)\ne 0$ for each arrow $\alpha$, then we have an isomorphism \[ \Lambda^{co}_{Q,q} \stackrel{\sim}{\rightarrow} \Lambda^{co}_{Q,1}.\]
\end{lemma}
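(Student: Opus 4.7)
The plan is to realise the desired isomorphism by lifting it to an automorphism of the ambient path algebra $k\overline{Q}$. More precisely, the strategy is to produce an algebra automorphism $\phi \colon k\overline{Q} \to k\overline{Q}$ that fixes each vertex idempotent and rescales arrows, in such a way that $\phi\bigl(\langle \rho_1 \rangle\bigr) = \langle \rho_q \rangle$; passing to the quotient then yields the claimed isomorphism.

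Concretely, for any scalars $\lambda_\alpha, \mu_\alpha \in k^*$ indexed by $\alpha \in Q_1$, the prescription
\[
\phi(e_i) = e_i, \quad \phi(\alpha) = \lambda_\alpha \alpha, \quad \phi(\alpha^*) = \mu_\alpha \alpha^*
\]
extends uniquely to an automorphism of $k\overline{Q}$. Using the identity $\langle \rho_q \rangle = \langle \rho_{q,i} \mid i \in Q_0 \rangle$ recorded in the remark after \Cref{codef}, a short computation gives
\[
\phi(\rho_{1,i}) = \sum_{s(\alpha) = i} \lambda_\alpha \mu_\alpha\, \alpha^*\alpha - \sum_{t(\alpha) = i} \lambda_\alpha \mu_\alpha\, \alpha\alpha^*.
\]
Since rescaling each generator $\rho_{q,i}$ by a nonzero constant $s_i$ leaves its ideal unchanged, it is enough to match $\phi(\rho_{1,i})$ with $s_i \rho_{q,i}$ term by term. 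This forces $\lambda_\alpha \mu_\alpha = s_{s(\alpha)}$ (from the source contribution) and $\lambda_\alpha \mu_\alpha = q(\alpha)\, s_{t(\alpha)}$ (from the target contribution), so the whole problem reduces to finding a function $s \colon Q_0 \to k^*$ with $s_{s(\alpha)} = q(\alpha)\, s_{t(\alpha)}$ for every $\alpha \in Q_1$; once such an $s$ is in hand, the choice $\mu_\alpha = 1$, $\lambda_\alpha = s_{s(\alpha)}$ completes the construction of $\phi$.

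The tree hypothesis is exactly what makes this system solvable. Fix any root vertex $i_0 \in Q_0$ and set $s_{i_0} = 1$; for any other $j \in Q_0$, traverse the unique reduced walk in the underlying tree from $i_0$ to $j$ and define $s_j$ by multiplying the current value by $q(\alpha)$ or $q(\alpha)^{-1}$ at each traversed edge, according to its orientation. Because trees have no cycles, there is no consistency obstruction and the required relation $s_{s(\alpha)} = q(\alpha)\, s_{t(\alpha)}$ holds on every edge by construction. This combinatorial consistency is the only non-trivial step: on a quiver containing a cycle, the product of the $q(\alpha)^{\pm 1}$ around the cycle would have to equal $1$, which would impose genuine constraints on $q$ and cause the statement to fail in general. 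With $s$ and hence $\phi$ in place, verifying that $\phi$ descends to an isomorphism $\Lambda^{co}_{Q,1} \iso \Lambda^{co}_{Q,q}$ is immediate, and the isomorphism asserted in the lemma is obtained by inverting the resulting map.
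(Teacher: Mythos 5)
Your proof is correct and follows essentially the same strategy as the paper: a rescaling automorphism of $k\overline{Q}$ that fixes vertices and multiplies arrows by units coming from a function on $Q_0$, with the tree hypothesis guaranteeing that the multiplicative consistency conditions can be solved. The only difference is cosmetic — where the paper builds its vertex function $\epsilon$ by induction on a height function, you solve the equivalent system $s_{s(\alpha)} = q(\alpha)\,s_{t(\alpha)}$ directly by propagating from a root along the unique reduced walks of the tree, which is a cleaner packaging of the same idea.
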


In particular, if we look at the special case where $q$ is the constant function with value $-1$, then the lemma shows that, if $Q$ is a tree, the algebras $\Lambda^{co}_{Q}=\Lambda^{co}_{Q,1}$ and $\Lambda^{co}_{Q,-1}$ are isomorphic.

\begin{proof}
We will construct a function $\epsilon:Q_0 \to k^*$ such that the algebra isomorphism \[\varphi_{\epsilon}:k\overline{Q} \to k\overline{Q}\] taking an arrow $\alpha: i \to j$ to $\epsilon(i)^{-1}\epsilon(j)\alpha$ and $\alpha^*:j \to i$ to $\alpha^*$ induces an isomorphism
\[ \overline{\varphi_{\epsilon}}: \Lambda^{co}_{Q,q} \stackrel{\sim}{\rightarrow} \Lambda^{co}_{Q,1}.\]
We will construct $\epsilon$ recursively. For this, we first define a height function $h:Q_0 \to \Z$ such that 
\[h(j)=h(i)+1\] whenever there is an arrow $\alpha:i \to j $ in $Q$ and \[ \mathrm{min}_{i \in Q_0} h(i)=0.\]
For example for the quiver 
\begin{center}
    
\begin{tikzcd}
                                &                                            & 3 \arrow[ddd, no head, blue]     &                                                            & 6 \arrow[r] \arrow[ddd, no head, blue] & 8 \arrow[ddd, no head, blue] &                        \\
                                & 2 \arrow[ru] \arrow[r] \arrow[dd, no head, blue] & 4 \arrow[r, shift right=2] & 5 \arrow[ru, shift right=3] \arrow[rd] \arrow[dd, no head, blue] &                                  &                        & 10 \arrow[dd, no head, blue] \\
1 \arrow[ru] \arrow[d, no head, blue] &                                            &                            &                                                            & 7 \arrow[r]                      & 3 \arrow[ru] \arrow[r] & 11                     \\
\textcolor{blue}{0}             & \textcolor{blue}{1}                        & \textcolor{blue}{2}        & \textcolor{blue}{3}                                        & \textcolor{blue}{4}              & \textcolor{blue}{5}    & \textcolor{blue}{6}   
\end{tikzcd}
\end{center}
we have the height function \textcolor{blue}{h}.\\
For $n \ge 0$, denote by $Q^{\le n}$ the full subquiver of $Q$ whose vertices are the $i \in Q_0$ such that $h(i) \le n$. 
By induction on $n \ge 0$, we will define
\[\epsilon^{\le n}: Q_0^{\le n} \to k \]
such that the associated isomorphism
\[\varphi_{\epsilon^{\le n}}:k\overline{Q^{\le n}} \stackrel{\sim}{\longrightarrow} k\overline{Q^{\le n}}\]
defined like $\varphi_{\epsilon}$ above induces an isomorphism 
\[  \Lambda^{co}_{Q^{\le n},q^{\le n}} \stackrel{\sim}{\rightarrow} \Lambda^{co}_{Q^{\le n},1}\]
where $q^{\le n}$ is the restriction of $q$ to $Q^{\le n}_1$. 
For $n=0$, the quiver $Q^{\le 0}$ has no arrows and we define 
\[\epsilon^{\le 0}: Q_0^{\le 0} \to k \] to be constant with value 1. Now suppose that \[\epsilon^{\le n}: Q_0^{\le n} \to k \] is defined. Let $Q_0^{= n+1}$ be the set of vertices $i$ of height $h(i)=n+1$. Let $i \in Q_0^{=n+1}$. Let us denote the incoming and outgoing arrows of $Q$ at $i$ by $\alpha_1,\cdots,\alpha_s$ and $\beta_1,\cdots,\beta_t$ so that \[\alpha_i:s(\alpha_i) \to i, \quad \beta_j:i \to t(\beta_j).\]
The preprojective relation at $i$ in $Q^{\le n+1}$ is 
\begin{equation}\label{eq1}
    \sum_{u=i}^s q_{\alpha_u}\alpha_u\alpha^*_u - \sum_{v=i}^t \beta_v\beta_v^*
\end{equation}
respectively
\begin{equation}\label{eq2}
    \sum_{u=i}^s \alpha_u\alpha^*_u - \sum_{v=i}^t \beta_v\beta_v^*.
\end{equation}
Suppose we have chosen a function $\epsilon^{\le n+1}$ on $Q_0^{\le n+1}$. Then $\varphi_{\epsilon^{\le n+1}}$ transforms the equation \ref{eq1} into 
\begin{equation}\label{eq1.2}
    \sum_{u=i}^s \epsilon^{\bullet}(i)\epsilon^{\bullet}(s(\alpha_i))^{-1}q_{\alpha_u}\alpha_u\alpha^*_u - \sum_{v=i}^t\epsilon^{\bullet}(t(\beta_i))\epsilon^{\bullet}(i)^{-1} \beta_v\beta_v^*
\end{equation}
where $\epsilon^{\bullet}(i) := \epsilon^{\le n+1}(i)$. We now define 
\[\epsilon^{\bullet}(i)=1, \quad \epsilon^{\bullet}(s(\alpha_u))=q_{\alpha_u}, \quad \epsilon^{\bullet}(t(\beta_v))=1\]
for all $i$ and $j$ and \begin{equation}\label{eq3}
\epsilon^{\bullet}(k)=\epsilon^{\le n}(k)\epsilon^{\le n}(s(\alpha_u))^{-1}q_{\alpha_u}    
\end{equation}
for all vertices $k$ in the connected components of $s(\alpha_u)$ in $Q^{\le n}$. For different vertices $s(\alpha_u)$ the connected components of $s(\alpha_u)$ in $Q^{\le n}$ are disjoint because $Q$ is a tree. Thus, the definition \ref{eq3} does not lead to contradictions. Notice that on each connected components $C$ of $Q^{\le n}$ the new function $\epsilon^{\le n+1}$ restricts to a scalar multiple of the previous function $\epsilon^{\le n}\vert_{C}$. Therefore, the morphism
\[\varphi^{\le n+1}:k\overline{Q^{\le n+1}} \stackrel{\sim}{\longrightarrow} k\overline{Q^{\le n+1}}\]
takes the $q$-preprojective relations at all vertices $k$ of height $\le n$ to scalar multiples of the $1$-preprojective relation at $k$. 
By construction, the same holds for the vertices of height $n+1$. We now define $\epsilon$ to be $\epsilon^{\le n}$ where $n$ is any integer grater than or equal to the maximal height of a vertex of $Q$.

\end{proof}
\section{Homological definition}

\begin{definition}
Let $\ct$ be a $k$-linear triangulated category and $F: \ct \to \ct$ an autoequivalence that we can assume to be an automorphism. Let $F^\Z$ denote the group of automorphisms generated by $F$. By definition, the \emph{orbit category} $\ct/F=\ct/F^\Z$ has the same objects as $\ct$ and its morphisms from $X$ to $Y$ are in bijection with \[\bigoplus_{n \in \Z} \Hom_{\ct}(X,F^nY),\] \ie a morphism $u:X \to Y$ is given by a morphism $X \to \bigoplus_{i=1}^N F^{n_i}Y$ with $N$ non vanishing components $u_1,\cdots,u_N \in \ct$.
The composition is defined in the natural way.
\end{definition}

\begin{remark} Let $Q$ be a Dynkin quiver,
we recall that the Serre functor in the derived category $D^b(\mod kQ)$ has the following form: \[S=-\otimes^L_{kQ} D(kQ).\]
Moreover, we can explicitly compute its inverse. 
\begin{proposition}\label{S-1}
The inverse of the Serre functor $S^{-1}$ in $D^b(\mod A)$ is isomorphic to  \[-\otimes_A^L \mathrm{R}\Hom_A(DA,A).\]
\end{proposition}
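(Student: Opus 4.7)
The plan is to identify the right adjoint of $S$ and then exploit the fact that an equivalence of triangulated categories is canonically isomorphic to each of its adjoints. By the derived tensor--hom adjunction applied to the $A$-$A$-bimodule $DA$, the functor $S = -\otimes_A^L DA$ admits the right adjoint $\mathrm{R}\Hom_A(DA,-)$. Since $A = kQ$ has finite global dimension (in fact it is hereditary), $DA$ is a perfect complex of right $A$-modules; in particular $S$ is an equivalence, so its right adjoint is automatically a quasi-inverse. This gives the intermediate identification $S^{-1}\cong \mathrm{R}\Hom_A(DA,-)$.

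It remains to rewrite this right adjoint as a derived tensor product. For this I would invoke the following standard identity: for every perfect complex $P$ of right $A$-modules and every $Y\in D^b(\mod A)$, the natural evaluation map
\[
Y\otimes_A^L \mathrm{R}\Hom_A(P,A)\longrightarrow \mathrm{R}\Hom_A(P,Y)
\]
is an isomorphism. One proves this by dévissage: the map is manifestly an isomorphism for $P=A$, both sides are triangulated functors of $P$ that commute with shifts and finite direct sums, and the full subcategory of those $P$ for which the map is an isomorphism is therefore a thick subcategory of $D(\mod A)$ that contains $A$, hence contains all of $\per(A)$. Applying this identity to $P=DA$, which is perfect by the previous paragraph, yields the natural isomorphism
\[
- \otimes_A^L \mathrm{R}\Hom_A(DA,A)\;\cong\;\mathrm{R}\Hom_A(DA,-)\;\cong\; S^{-1}.
\]

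The only delicate point, and the one most likely to demand care, is the bookkeeping of bimodule structures: the right $A$-action on $\mathrm{R}\Hom_A(DA,A)$ that makes the tensor product land in $D^b(\mod A)$ is the one inherited from the left $A$-module structure on $DA$ via precomposition, and one must check that the natural maps above are bimodule-linear, so that the pointwise isomorphism upgrades to an isomorphism of functors. Once these actions are spelled out, the rest of the argument is formal, and I do not expect a serious obstacle beyond careful notational discipline.
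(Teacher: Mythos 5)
Your proposal is correct and follows essentially the same route as the paper: the paper's proof also identifies $S^{-1}$ with the right adjoint of $-\otimes_A^L DA$ and rewrites that adjoint as $-\otimes_A^L\mathrm{R}\Hom_A(DA,A)$ via a lemma on bimodule complexes $X$ with $X\vert_B$ perfect. If anything, you supply more detail than the paper, which states the key lemma without proof, whereas you spell out the tensor--hom adjunction and the d\'evissage argument reducing the evaluation isomorphism to the case $P=A$; the only cosmetic difference is that you deduce perfectness of $DA$ from $A$ being hereditary, while the paper deduces it formally from $S$ being an equivalence preserving perfect objects.
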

\begin{proof}
We need the following lemma.
\begin{lemma}
Let $A,B$ be two algebras, if $X \in D(\mod A\op \otimes B)$ is a complex of bimodules such that $X\vert_B$ is perfect, then the right adjoint of $-\otimes_{A}^{\mathrm{L}}X$ is $-\otimes^{\mathrm{L}}_B \mathrm{R}\Hom_B(X,B)$. In particular, if $F=-\otimes_{A}^{\mathrm{L}}X$ is an equivalence then $X\vert_B= A \otimes^{\mathrm{L}}_A X \in D^b(\mod B)$ is perfect (since $A$ is perfect in $D(\mod A)$ and $F$ is an equivalence) and $F^{-1}$ is isomorphic to $-\otimes^{\mathrm{L}}_B\mathrm{R}\Hom_B(X,B)$.
\end{lemma}
Then $S^{-1}\simeq -\otimes_A^L \mathrm{R}\Hom_A(DA,A).$
\end{proof}
We also want to recall that the Auslander-Reiten translation $\tau$ can be expressed with the Serre functor in the following way:
\[\tau= [-1]\circ S,\]
where $[-1]$ is the shift functor.
\end{remark}

We consider the orbit category $D^b(\mod kQ)/\tau^-$.

\begin{definition} [\textbf{Homological definition}]\label{hodef}
The \emph{homological preprojective algebra} of the quiver $Q$ is the morphism space in the orbit category $D^b(\mod kQ)/\tau^-$. Then, it is of the form \[ \Lambda^{ho}_Q= \bigoplus_{i,j \in Q_0}\bigoplus_{p\ge0}\Hom_{D^b(\mod kQ)}(P_i, \tau^{-p}P_j),\]

where, if we denote by $e_i$ the idempotent element of the path algebra $kQ$ associated to the vertex $i$, then $P_i=e_ikQ$ is the indecompsable projective module associated to the vertex $i$.

\end{definition}

\section{Tensor Definition}

\begin{definition}
Let $\Lambda$ be a ring and $\Omega$ a $\Lambda$-bimodule. We put $\Omega^{\otimes 0}=\Lambda$ and for $n \ge 1$, we denote by \[\Omega^{\otimes n}=\Omega\otimes_{\Lambda}\Omega\otimes_{\Lambda}\otimes_{\Lambda}\cdots\otimes_{\Lambda}\Omega\]
the $n$th tensor power of $\Omega$ over $\Lambda$.  The \emph{tensor algebra} of $\Omega$ over $\Lambda$ is  \[T_\Lambda(\Omega)= \bigoplus_{t\ge 0} \Omega^{\otimes t}.\]

The product of $a \in \Omega^{\otimes s}$ and $b \in \Omega^{\otimes t}$ in the tensor algebra is given by the element $a \otimes b \in \Omega^{\otimes s+t}=\Omega^{\otimes s}\otimes \Omega^{\otimes t}$ with $s$ or $t \ge 1$ and the product $ab$ in $\Lambda$ otherwise.
\end{definition}

\begin{remark}
The ring $\Lambda$ is in particular a subring of $T_\Lambda(\Omega)$. Thus, there is a forgetful functor from the category of all $T_\Lambda(\Omega) $-modules to the category of $\Lambda$-modules.
\end{remark}

Let now $D$ be the usual duality $D=\Hom_k(-,k)$. We consider \[\Omega=\Ext^1_{kQ}(D(kQ), kQ),\]
where the right $kQ$-module structure comes from the left $kQ$-module structure on $D(kQ)$ and the left $kQ$-module structure from the left $kQ$-module structure on the second argument $kQ$. 

\begin{definition}[\textbf{Tensor definition}]\label{tedef}
Let $\Omega=\Ext^1_{kQ}(D(kQ), kQ)$. The \emph{tensor preprojective algebra} of the quiver $Q$ is the tensor algebra \[\Lambda^{te}_Q=T_{kQ}(\Omega).\] 
\end{definition}

\section{Equivalence between the three definitions}

We denote by $\cp Q$ the path category associated a quiver $Q$ and by $k \cp Q$ the $k$-linearization of the path category.\\
Consider a translation quiver ($\Z Q, \tau)$ and a map $\sigma: (\Z Q)_1 \to (\Z Q)_1$ such that, for any $\alpha:i \to j$, $\sigma(\alpha): \tau j \to i$.

 \begin{remark}
 For each vertex we then obtain a bijection
 \[ \sigma:\{\text{arrows ending in  }x\}\stackrel{\sim}{\to} \{\text{arrows starting in  }\tau(x)\}\]
 \end{remark}
 
\begin{center}\adjustbox{scale=0.7,center}{
\begin{tikzcd}
{} \arrow[rd, red] &                          & {} \arrow[rd, red]      &                               & {(0,3)} \arrow[rd, "\sigma(\beta)", red] &                               & {(1,3)} \arrow[rd, red] &                                \\
              & {} \arrow[ru] \arrow[rd,  red] &                    & {(0,2)} \arrow[ru, "\tau(\beta)"] \arrow[rd,"\sigma(\alpha)", red] &                    & {(1,2)} \arrow[ru, "\beta"] \arrow[rd, red] &                    & {(n,i)}\\ %\arrow[ru] \arrow[rd, red]                         \\
{} \arrow[ru] &                          & {(0,1)} \arrow[ru] &                               & {(1,1)} \arrow[ru, "\alpha"] &                               & {} \arrow[ru]      &                               \\
              &                          &                    & \Z Q                           &                    &                               &                    &                                       
\end{tikzcd}}
 \end{center}

\begin{definition}
 The \emph{mesh ideal} in $\cp(\Z Q)$, is the bilateral ideal generated by the elements \[m_{(n,i)}=\sum_{\alpha \text{ s.t. } t(\alpha)=(n,i)} \alpha \circ \sigma(\alpha).\]
    \item The \emph{mesh category } $k(\Z Q)$ is the quotient category of the path category $\cp(\Z Q)$ by the mesh ideal. 
    \end{definition}
\begin{remark}
Given a quiver $Q$, consider the indecomposable projective right $kQ$-module associated to the vertex $i \in Q_0$, $P_i$. Thus, we have an equivalence of categories
\begin{center}
\begin{align*}
\hspace{4cm}& k \cp Q & &\stackrel{\sim}{\longrightarrow} & &\ind (\proj(kQ)) &\hspace{3cm}\\
\hspace{4cm}&i  & &\mapsto & &P_i &\hspace{3cm}\\
\hspace{4cm}&\alpha: i \to j & &\mapsto & &P_i \to P_j &\hspace{3cm}\\
\hspace{4cm}&\quad& &\quad & &p \mapsto \alpha p &\hspace{3cm}
\end{align*}
\end{center}

\end{remark}

\begin{theorem}[Happel]\label{Happel}\cite{Happel88}
Let $Q$ be a Dynkin quiver. Then we have the following equivalence of categories\[k(\Z Q)\simeq \ind D^b(kQ),\]
where we shortly write $D^b(kQ)$ instead of $D^b(\mod kQ)$.

In other words we have a commutative diagram

\[\begin{tikzcd}
i \arrow[rr, maps to]                                &  & P_i                            \\
k\cp Q \arrow[d, hook] \arrow[rr] \arrow[rr, "\sim"] &  & \ind (\proj(kQ)) \arrow[d, hook] \\
k(\Z Q) \arrow[rr] \arrow[rr, "\sim"]              &  & \ind D^b(kQ)                   \\
{(n,i)} \arrow[rr, maps to]                          &  & \tau^{-n}P_i                  
\end{tikzcd}\]

where $\tau$ is the Auslander-Reiten traslation.
\end{theorem}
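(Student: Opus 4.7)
The plan is to construct an explicit $k$-linear functor $F:\cp(\Z Q)\to \ind D^b(kQ)$ sending $(n,i)\mapsto \tau^{-n}P_i$, show that it factors through the mesh ideal, and then verify that the induced functor $\bar F: k(\Z Q)\to \ind D^b(kQ)$ is an equivalence. On objects the definition is unambiguous because $\tau$ is an autoequivalence of $D^b(kQ)$ and the $P_i$ are pairwise non-isomorphic indecomposables, so the $\tau^{-n}P_i$ remain pairwise non-isomorphic indecomposables. On an arrow $\alpha:(n-1,s(\alpha))\to(n,t(\alpha))$ of $\Z Q$, I would pick $F(\alpha)$ to be a nonzero irreducible morphism $\tau^{1-n}P_{s(\alpha)}\to \tau^{-n}P_{t(\alpha)}$. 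The existence of such irreducibles, and the fact that they exhaust the irreducible morphisms between indecomposables of $D^b(kQ)$, follows from the existence of Auslander--Reiten triangles in $D^b(kQ)$ together with the fact that in the Dynkin case the AR-quiver of $D^b(kQ)$ has shape precisely $\Z Q$.

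To check that $F$ kills the mesh relations, I would use the AR-triangle in $D^b(kQ)$ ending at $\tau^{-n}P_i$,
\[\tau^{1-n}P_i\longrightarrow \bigoplus_{\alpha:\,t(\alpha)=(n,i)} F(s(\alpha))\longrightarrow \tau^{-n}P_i\longrightarrow \tau^{1-n}P_i[1],\]
whose first two morphisms, after absorbing scalars into the chosen irreducibles, have components exactly $F(\sigma(\alpha))$ and $F(\alpha)$ respectively. Since the composition of two consecutive maps in a triangle is zero, we obtain $\sum_\alpha F(\alpha)\circ F(\sigma(\alpha))=0$, i.e.\ $F(m_{(n,i)})=0$, so $F$ descends to a well-defined functor $\bar F$.

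For essential surjectivity I would invoke Gabriel's theorem: in the Dynkin case every indecomposable of $\mod kQ$ is preprojective, hence of the form $\tau^{-n}P_i$ for some $n\ge 0$ and $i\in Q_0$, and every indecomposable of $D^b(kQ)$ is a shift of an indecomposable module. Since $S=\tau\circ[1]$ is an autoequivalence permuting indecomposables, the shift $[1]$ can itself be rewritten as a suitable $\tau^{-m}$ composed with a permutation of the $P_j$, so every indecomposable of $D^b(kQ)$ lies in the image of $\bar F$.

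The main obstacle is showing that $\bar F$ is fully faithful. I would argue by induction on $n-m$ for the Hom-spaces $\Hom_{k(\Z Q)}((m,i),(n,j))\to \Hom_{D^b(kQ)}(\tau^{-m}P_i,\tau^{-n}P_j)$, with base case $m=n$ reducing, after applying $\tau^m$, to the equivalence $k\cp Q\simeq \ind(\proj kQ)$ already recorded in the previous remark. For the inductive step, applying $\Hom_{k(\Z Q)}((m,i),-)$ to a defining mesh at $(n,j)$, and $\Hom_{D^b(kQ)}(\tau^{-m}P_i,-)$ to the corresponding AR-triangle, yields parallel exact sequences whose outer terms agree by induction; a diagram chase then propagates agreement to the inner term. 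The essential use of Dynkin-ness is that both categories satisfy \emph{the same} recursion starting from the projective slice $\{0\}\times Q_0$ — in the non-Dynkin setting additional components (regular and preinjective) appear outside the AR-orbits of the $P_i$, and this induction scheme breaks down.
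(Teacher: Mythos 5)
The paper does not actually prove this statement: it is quoted as Happel's theorem with a citation to \cite{Happel88}, so there is no internal proof to compare against. Your outline follows the standard route from the literature (Happel's Chapter I.5 together with the Riedtmann--Gabriel ``standardness'' argument), and the essential surjectivity step is fine: indecomposables of $D^b(kQ)$ are shifts of indecomposable modules, all modules are preprojective in the Dynkin case, and $[1]=\tau^{-1}S$ sends $P_i$ to $\tau^{-1}I_i$, which is again of the form $\tau^{-n}P_j$.

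There are, however, two genuine gaps. First, the definition of $F$ on arrows and the verification of the mesh relations: an irreducible morphism $\tau^{1-n}P_{s(\alpha)}\to\tau^{-n}P_{t(\alpha)}$ is only well defined up to a nonzero scalar \emph{and} up to addition of an element of $\rad^2$. If you pick the $F(\alpha)$ arbitrarily and then compare with the actual AR-triangle maps $f=(f_\alpha)$, $g=(g_\alpha)$, you get $\sum_\alpha F(\alpha)F(\sigma(\alpha))=\sum_\alpha c_\alpha d_\alpha\, g_\alpha f_\alpha + (\text{terms in }\rad^3)$; absorbing scalars kills neither the discrepancy among the products $c_\alpha d_\alpha$ across the different meshes containing a given arrow (each arrow lies in two meshes, so the rescalings are globally constrained) nor the $\rad^3$ remainder. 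The correct order of operations is to build $F$ slice by slice, \emph{defining} $F(\alpha)$ to be the literal components of an AR-triangle whose left-hand map has the already-constructed $F(\sigma(\alpha))$ as components; this in turn uses that such a map is left almost split, which holds here because $Q$ is a tree and all irreducible morphism spaces are one-dimensional. Second, in the full-faithfulness induction, comparing the right-exact sequences obtained from a mesh and from the corresponding AR-triangle only yields \emph{surjectivity} of the comparison maps on Hom-spaces; injectivity needs a separate input (left exactness of the mesh-category sequence, or a dimension count of $\Hom_{k(\Z Q)}((m,i),(n,j))$ against $\dim\Hom_{D^b(kQ)}(\tau^{-m}P_i,\tau^{-n}P_j)$), and your sketch does not supply it.
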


We are now ready to to prove our main result. 
\begin{theorem}\label{equivalence}
We have canonical isomorphisms
\[\Lambda_Q^{co} \stackrel{\sim}{\longleftarrow} \Lambda_Q^{ho} \stackrel{\sim}{\longrightarrow} \Lambda_Q^{te}\] between the combinatorial, the homological and tensor preprojective algebra. 

\end{theorem}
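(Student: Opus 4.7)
The plan is to construct both isomorphisms explicitly and degree by degree, using Happel's theorem to handle the combinatorial side and the explicit form of $\tau^{-1}$ as a tensor product to handle the tensor side. All three algebras are naturally $\N$-graded (by length of paths in $\overline{Q}$, by the exponent $p$ of $\tau^{-p}$, and by tensor degree), and the identifications will be seen to respect these gradings; hence it suffices to match each graded piece and then check compatibility with multiplication.

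For $\Lambda_Q^{ho} \stackrel{\sim}{\longrightarrow} \Lambda_Q^{co}$: By \Cref{Happel}, $\ind D^b(\mod kQ)$ is equivalent to the mesh category $k(\Z Q)$ under $(n,i) \mapsto \tau^{-n}P_i$, so each graded piece $\Hom_{D^b(\mod kQ)}(P_i, \tau^{-p}P_j)$ is the space of linear combinations of paths from $(0,i)$ to $(p,j)$ in $\Z Q$ modulo the mesh ideal. Summing over $p\ge 0$ and over $i,j \in Q_0$ presents $\Lambda_Q^{ho}$ as the quotient of the ``forward slice'' of $k\cp(\Z Q)$ by mesh relations. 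The orbit category construction identifies all $\tau$-translates of a vertex, collapsing $\Z Q$ onto $Q_0$ so that arrows of $\Z Q$ arising from $Q_1$ descend to the arrows $\alpha$ of $\overline{Q}$ and the $\sigma$-arrows descend to the starred arrows $\alpha^* \in Q_1^*$. Under this identification the mesh relation $m_{(n,i)}$ at every vertex is sent (up to sign) to $\rho_i$, and the quotient becomes exactly $k\overline{Q}/\langle \rho \rangle = \Lambda_Q^{co}$.

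For $\Lambda_Q^{ho} \stackrel{\sim}{\longrightarrow} \Lambda_Q^{te}$: Combining $\tau^{-1} = S^{-1} \circ [1]$ with \Cref{S-1} yields $\tau^{-1} \simeq (- \otimes^L_{kQ} \RHom_{kQ}(D(kQ), kQ))[1]$. Since $kQ$ is hereditary with $Q$ Dynkin, the complex $\RHom_{kQ}(D(kQ), kQ)$ is concentrated in cohomological degrees $0$ and $1$; a direct check gives $\Hom_{kQ}(D(kQ), kQ) = 0$, so $\RHom_{kQ}(D(kQ), kQ) \simeq \Omega[-1]$ and therefore $\tau^{-1} \simeq - \otimes^L_{kQ} \Omega$. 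Iterating and invoking the keylemma to replace the derived tensor powers by ordinary ones gives $\tau^{-p}P_j \simeq P_j \otimes_{kQ} \Omega^{\otimes p}$, hence
\[
\bigoplus_{i,j \in Q_0} \Hom_{D^b(\mod kQ)}(P_i, \tau^{-p}P_j) \;\cong\; \Omega^{\otimes p}.
\]
Summing over $p \ge 0$ produces $T_{kQ}(\Omega) = \Lambda_Q^{te}$.

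The main obstacle is not the bijection at the level of graded pieces but the verification that both maps are ring isomorphisms, i.e.\ that they intertwine composition in the orbit category with concatenation of paths in $k\overline{Q}/\langle \rho \rangle$ on one side and with the tensor product in $T_{kQ}(\Omega)$ on the other. On the combinatorial side, this requires tracking how a composition that ``crosses'' a $\tau$-shift in $\Z Q$ is rewritten as a product of $\alpha$'s and $\alpha^*$'s in $\overline{Q}$, and controlling signs coming from the mesh relations. On the tensor side, the subtle point is checking the hypotheses of the keylemma for $\Omega$ so that the iteration $\tau^{-p} \simeq - \otimes^L (\Omega^{\otimes^L p})$ can safely be replaced by $- \otimes \Omega^{\otimes p}$; once this is in place, the associativity of the tensor product over $kQ$ matches the composition of successive applications of $- \otimes_{kQ} \Omega$ in the orbit category, yielding the algebra isomorphism.
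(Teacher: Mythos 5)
Your overall route is essentially the paper's: Happel's theorem plus the covering $\pi:\Z Q\to\overline{Q}$ for $\Lambda^{ho}_Q\to\Lambda^{co}_Q$, and $\tau^{-1}=S^{-1}\circ[1]$ together with \Cref{S-1} and \Cref{keylemma} for $\Lambda^{ho}_Q\to\Lambda^{te}_Q$. The first half is fine (the sign you wave at is exactly what \Cref{commutator} with $q\equiv -1$ is there to absorb). The second half, however, contains a false step: the claim that ``a direct check gives $\Hom_{kQ}(D(kQ),kQ)=0$,'' hence that $\RHom_{kQ}(D(kQ),kQ)\simeq\Omega[-1]$ and $\tau^{-1}\simeq -\otimes^{\mathrm{L}}_{kQ}\Omega$. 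This is wrong in general: already for $Q=A_2$ the algebra $kQ$ has a projective--injective indecomposable, so $\Hom_{kQ}(DA,A)\neq 0$ (and for $A_1$ one has $\Hom_k(k,k)=k$). Consequently $(DA)^{\vee}=\RHom_{kQ}(DA,kQ)$ genuinely has cohomology in both degrees $0$ and $1$, and $\tau^{-1}$ is \emph{not} isomorphic to $-\otimes^{\mathrm{L}}_{kQ}\Omega$ as a functor on $D^b(\mod kQ)$.

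The error is localized, and the paper's argument shows how to route around it: no functor-level identification of $\tau^{-1}$ with $-\otimes^{\mathrm{L}}\Omega$ is needed. Since $kQ$ is hereditary, $H^n((DA)^{\vee}[1])=\Ext^{n+1}_{kQ}(DA,kQ)$ vanishes for all $n>0$, which is precisely the hypothesis of \Cref{keylemma}. Applying that lemma to $\tau^{-p}A\simeq ((DA)^{\vee}[1])^{\otimes^{\mathrm{L}}_{kQ}p}$ and only then taking $H^0=\Hom_{D^b(kQ)}(A,-)$ yields $\Hom_{D^b(kQ)}(A,\tau^{-p}A)\cong \bigl(H^0((DA)^{\vee}[1])\bigr)^{\otimes_{kQ}p}=\Omega^{\otimes_{kQ}p}$; the unwanted cohomology $\Hom_{kQ}(DA,A)$ sitting in degree $-1$ of $(DA)^{\vee}[1]$ is discarded by the final $H^0$, not beforehand. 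With that correction (and the multiplicativity check you already flag) your argument coincides with the paper's.
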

\begin{remark}
Thus, the three definitions \Cref{codef}, \Cref{hodef} and \Cref{tedef} are equivalent. Up to isomorphism, they yield the same preprojective algebra $\Lambda_Q$.
\end{remark}
\begin{proof}

  \textit{First step: isomorphism between $\Lambda^{co}_Q$ and $\Lambda^{ho}_Q$.}
  \begin{center}
  %\adjustbox{scale=0.7}{
\adjustbox{scale=0.7}{\begin{tikzcd}
{} \arrow[rd, red] &                          & {} \arrow[rd, red]      &                               & {(0,3)} \arrow[rd, red] &                               & {(1,3)} \arrow[rd, red] &                               & {}     &                     &    &              &                                       & 3 \arrow[ld,red, shift left=2] \\
              & {} \arrow[ru] \arrow[rd, red] &                    & {(0,2)} \arrow[ru] \arrow[rd, red] &                    & {(1,2)} \arrow[ru] \arrow[rd, red] &                    & {(n,i)} \arrow[ru] \arrow[rd, red] & \cdots & {} \arrow[r, "\pi"] & {} &              & 2 \arrow[ru] \arrow[ld, red, shift left=2] &                            \\
{} \arrow[ru] &                          & {(0,1)} \arrow[ru] &                               & {(1,1)} \arrow[ru] &                               & {} \arrow[ru]      &                               & {}     &                     &    & 1 \arrow[ru] &                                       &                            \\
              &                          &                    & \Z Q                           &                    &                               &                    &                               &        &                     &    &              & \overline{Q}                          &                           
\end{tikzcd}}
 \end{center}
Let $\pi: \Z Q \to \overline{Q}$ be the morphism of quivers defined by
\begin{center}
    $\pi((n,i))=i$ for $n \in \Z$ and $i \in Q_0$\\
    $\pi((n,\alpha))=\alpha$ for $n \in \Z$ and $\alpha \in Q_1$\\
    $\pi(\sigma(n,\alpha))=\alpha^*$ for $n \in \Z$ and $\alpha \in Q_1$
\end{center}
Let $\Z$ act on $\Z Q$ by \[ t(n,i)=(t+n,i), \quad t(n,\alpha)=(n + t, \alpha),\quad t\sigma(n,\alpha)=\sigma(t+n,\alpha).\]
Clearly $\pi$ induces bijections 

\begin{equation}\label{bijection1}
    (\Z Q)_0/\Z \stackrel{\sim}{\longrightarrow}\overline{Q}_0 \end{equation}\begin{equation}\label{bijection2}
    (\Z Q)_1/\Z \stackrel{\sim}{\longrightarrow}\overline{Q}_1
 \end{equation}

Let us still denote by $\pi$ the induced functors \[ \cp(\Z Q) \to \cp\overline{Q}\quad \text{ and }\quad k\cp(\Z Q) \to k\cp\overline{Q}\]
Clearly, $\pi$ takes a mesh relation $m_{(n,i)}=\sum_{\alpha \text{ s.t. } t(\alpha)=(n,i)} \alpha \circ \sigma(\alpha)$
\[\adjustbox{scale=0.8}{
\begin{tikzcd}
                                                                                                        & \bullet \arrow[rdd, "\alpha_1"]  &   \\
                                                                                                        & \bullet \arrow[rd, "\alpha_2"']  &   \\
\tau x \arrow[rdd, "\sigma(\alpha_s)"'] \arrow[ruu, "\sigma(\alpha_1)"] \arrow[ru, "\sigma(\alpha_2)"'] & \vdots                           & x \\
                                                                                                        &                                  &   \\
                                                                                                        & \bullet \arrow[ruu, "\alpha_s"'] &  
\end{tikzcd}}\]
ending at $(n,i) \in (\Z Q)_0$ to the projective relation \[ \rho=\sum_{t(\alpha)=i} \alpha \alpha^*.\]
Thus, $\pi$ induces a functor \[\pi: k(\Z Q) \to k\cp\overline{Q}/\langle \rho \rangle =:\tilde{\Lambda_Q}\] from the mesh category to the preprojective category, \ie the category with object set $Q_0$ whose associated algebra is the preprojective algebra. Thanks to the bijections \ref{bijection1} and \ref{bijection2}, the functor $\pi$ induces isomorphisms \[\bigoplus_{p \in \Z} \Hom_{k(\Z Q)}((0,i),\tau^{-p}(0,j)) \stackrel{\sim}{\longrightarrow} e_j\Lambda_Qe_i\] for $i,j \in Q_0$. By Happel's Theorem, we obtain isomorphisms
\[\bigoplus_{p \in \Z} \Hom_{D^b(kQ)}(P_i,\tau^{-p}P_j) \stackrel{\sim}{\longrightarrow} e_j\Lambda_Qe_i.\] Finally, passing from categories to algebras, we get the required isomorphism \[\Lambda_Q^{ho} \stackrel{\sim}{\longrightarrow}\Lambda_Q^{co}.\]
  
 \textit{Second step: Isomorphism between $\Lambda^{ho}_Q$ and $\Lambda^{te}_Q$.}     Let $A=kQ$. We want to construct an isomorphism \[ T_A(\Omega)\simeq\bigoplus_{i,j \in Q_0}\bigoplus_{p\ge0}\Hom_{D^b(A)}(P_i, \tau^{-p}P_j) .\]
    Thus, we have to construct an isomorphism \[\Hom_{D^b(A)}(P_i, \tau^{-p}P_j)\simeq e_j\Omega^{\otimes_{A}p}e_i.\]
    We can note that $e_j\Omega^{\otimes_{A} p}e_i$ is isomorphic to $\Hom(e_iA, e_jA\otimes_{A}\Omega^{\otimes_{A} p}).$ Then, since the $\Hom$ functor commutes with finite coproducts, it suffices to show that \[\Hom_{D^b(A)}(A, \tau^{-p}A)\simeq \Omega^{\otimes_{A} p}.\]
    By definition, we have  $\Hom_{D^b(A)}(A,X)\simeq H^0(X)$, for $X \in D^b(A)$. Therefore, we have to show that \[ H^0(\tau^{-p}A)\simeq \Hom_{D^b( A)}(A,\tau^{-p}A)\simeq \Omega^{\otimes_{A} p}.\]
    By \Cref{S-1} the inverse of the Serre Functor in $D^b(A)$ is \[-\otimes_A^\mathrm{L} \mathrm{R}\Hom_{A}(D(A),A).\]
    If we denote $\mathrm{R}\Hom_{A}(D(A),A)$ by $(DA)^{\vee}$, then we have  \[\tau^{-p}A=A \otimes_A^\mathrm{L} ((DA)^\vee[1])^{\otimes^\mL_A p}=((DA)^\vee[1])^{\otimes^\mL_A p},\]
    where the first equality comes from \[\tau^-(A\otimes_A^\mL (DA)^{\vee}[1])=A \otimes^\mL_A (DA)^\vee[1] \otimes^\mL_A (DA)^\vee[1].\]
    Then we have \[H^0(\tau^{-p}A)=H^0(((DA)^{\vee}[1])^{\otimes^\mL_A p})= (H^0((DA)^{\vee}[1]))^{\otimes_A p},\]
    where the last equality is due to the fact that the homology of $(DA)^{\vee}[1]$, is concentrated in degrees $\le 0$ so that we can apply the following lemma.
    \begin{lemma}\label{keylemma}
    Consider a complex of $A$-$A$-bimodules $X \in D(\mod A^e)$ such that $H^n(X)=0$ for any $n > 0$. Then we have \[H^0(X^{\otimes^\mL_A p})=(H^0X)^{\otimes_A p}.\]
    \end{lemma}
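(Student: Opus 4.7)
The plan is to proceed by induction on $p$, reducing to the binary statement that for two complexes $Y, Z \in D(\mod A^e)$ with cohomology concentrated in non-positive degrees, one has $H^0(Y \otimes^\mL_A Z) \cong H^0(Y) \otimes_A H^0(Z)$. The base case $p = 1$ is trivial. For the inductive step, assume the claim for $p-1$: then $Y := X^{\otimes^\mL_A (p-1)}$ satisfies $H^0(Y) \cong (H^0 X)^{\otimes_A (p-1)}$, and I need to verify the vanishing $H^n(Y) = 0$ for $n > 0$ separately so that the binary statement can be applied to $Y \otimes^\mL_A X$. This vanishing follows by replacing $X$ with its truncation $\tau^{\le 0} X$, picking a flat resolution concentrated in non-positive degrees, and observing that the total complex of a tensor product of two complexes concentrated in degrees $\le 0$ is again concentrated in degrees $\le 0$.

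For the binary statement, the main computation is as follows. Replace $Y$ by a projective resolution $P \to \tau^{\le 0} Y$ with $P^n = 0$ for $n > 0$, and replace $Z$ by $\tau^{\le 0} Z$. Then $Y \otimes^\mL_A Z$ is represented by the total complex $\opname{Tot}(P \otimes_A Z)$, whose degree $0$ component is simply $P^0 \otimes_A Z^0$, and the only differentials landing in degree $0$ come from $P^{-1} \otimes_A Z^0$ (via $d_P \otimes 1$) and from $P^0 \otimes_A Z^{-1}$ (via $1 \otimes d_Z$). Thus
\[ H^0(Y \otimes^\mL_A Z) \;=\; \opname{coker}\Bigl( (P^{-1} \otimes_A Z^0) \oplus (P^0 \otimes_A Z^{-1}) \longrightarrow P^0 \otimes_A Z^0 \Bigr) . \]
This cokernel is computed in two stages. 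First, applying right-exactness of $- \otimes_A Z^0$ to the presentation $P^{-1} \to P^0 \to H^0(P) = H^0(Y) \to 0$ identifies the quotient by the first image with $H^0(Y) \otimes_A Z^0$. Second, applying right-exactness of $H^0(Y) \otimes_A -$ to $Z^{-1} \to Z^0 \to H^0(Z) \to 0$ further identifies the remaining quotient with $H^0(Y) \otimes_A H^0(Z)$.

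The main delicate point is justifying that iterating the two cokernel operations produces the cokernel of the sum; this is the standard categorical fact that for two parallel maps into a common object, the cokernel of their sum agrees with the iterated cokernel, which is precisely what right-exactness of $\otimes_A$ delivers in an abelian category. No Tor vanishing on the individual cohomologies is needed because everything relevant to $H^0$ is controlled purely by the degrees $0$ and $-1$ after truncation, and the projectivity of $P^0$ and $P^{-1}$ in the resolution ensures that naive tensor product already computes $\otimes^\mL_A$ in those degrees.
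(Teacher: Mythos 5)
Your proof is correct, but there is nothing in the paper to compare it with: Lemma~\ref{keylemma} is stated in the middle of the proof of Theorem~\ref{equivalence} and used without any justification, so your write-up supplies a detail the paper omits. The argument you give is the standard one and it works: the induction on $p$ reduces to the binary statement; the auxiliary vanishing $H^n(X^{\otimes^{\mL}_A(p-1)})=0$ for $n>0$ does follow because a tensor product of complexes concentrated in degrees $\le 0$ is concentrated in degrees $\le 0$; and the two-stage cokernel computation is legitimate, since the cokernel of $(d_P\otimes 1,\,\pm 1\otimes d_Z)\colon (P^{-1}\otimes_A Z^0)\oplus(P^0\otimes_A Z^{-1})\to P^0\otimes_A Z^0$ is the quotient by the sum of the two images, which agrees with the iterated cokernel. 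Two small points you should make explicit: (i) the resolution $P$ should be taken in $\mod A^e$, and one should note that projective $A^e$-modules restrict to projective (hence flat) right $A$-modules, so that the naive total complex computes $\otimes^{\mL}_A$ --- for this the \emph{whole} complex $P$ must consist of projectives, not only $P^0$ and $P^{-1}$ as your closing sentence suggests (though only those two terms enter the final formula for $H^0$); (ii) in the second stage, the image of $P^0\otimes_A Z^{-1}$ in $H^0(Y)\otimes_A Z^0$ equals the image of $H^0(Y)\otimes_A Z^{-1}$ because $P^0\otimes_A Z^{-1}\to H^0(Y)\otimes_A Z^{-1}$ is surjective; this is what lets you invoke right-exactness of $H^0(Y)\otimes_A -$. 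With these remarks spelled out, the proof is complete.
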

    We have the following canonical isomorphisms
    \[H^0((DA)^\vee[1])=H^1((DA)^\vee)=H^1(\mathrm{R}\Hom_{A}(D(A),A))=\Ext^1_A(DA,A).\]
    Finally, putting all the results together, we conclude \[\Hom_{D^b(A)}(A, \tau^{-p}A)=H^0(\tau^{-p}A)=\]
    \[=(H^0((DA)^\vee[1]))^{\otimes_A p}=(\Ext^1_A(DA,A))^{\otimes_A p}=\Omega^{\otimes_A p}.\]
    \end{proof}
    
\begin{remark}
    The results in this article actually hold in a more general setting. In particular, it is possible to prove them for all finite and acyclic quivers $Q$. Under these weaker hypotheses, we can deduce from Happel's results in \cite{Happel88}, that the mesh category $k(\Z Q)$ is isomorphic to a full subcategory of the derived category $\ind D^b(kQ)$ . Then, we can generalize the proof of the \Cref{equivalence} using this characterization. Since the aim of this article is to give a reference for the results and the techniques in the proofs, we are not generalising them here. 
\end{remark}

 \def\cprime{$'$} \def\cprime{$'$}
\providecommand{\bysame}{\leavevmode\hbox to3em{\hrulefill}\thinspace}
\providecommand{\MR}{\relax\ifhmode\unskip\space\fi MR }
% \MRhref is called by the amsart/book/proc definition of \MR.
\providecommand{\MRhref}[2]{%
  \href{http://www.ams.org/mathscinet-getitem?mr=#1}{#2}
}
\providecommand{\href}[2]{#2}


\begin{thebibliography}{100}



\bibitem{Gabriel80}
Peter Gabriel, \emph{Auslander-{R}eiten sequences and representation-finite
  algebras}, Representation theory, I (Proc. Workshop, Carleton Univ., Ottawa,
  Ont., 1979), Springer, Berlin, 1980, pp.~1--71.
  

\bibitem{Happel87}
Dieter Happel, \emph{On the derived category of a finite-dimensional algebra},
  Comment. Math. Helv. \textbf{62} (1987), no.~3, 339--389.

\bibitem{Happel88}
Dieter Happel, \emph{Triangulated categories in the representation theory of
  finite-dimensional algebras}, Cambridge University Press, Cambridge, 1988.



\bibitem{Keller09a}
Keller, Bernhard (F-PARIS7-M)
Derived categories and tilting. (English summary) Handbook of tilting
theory, 49–104,
London Math. Soc. Lecture Note Ser., 332, Cambridge Univ. Press,
Cambridge, 2007.

\bibitem{Keller09b}
Keller, Bernhard (F-PARIS7-M)
Calabi-Yau triangulated categories. Trends in representation theory of
algebras and related topics, 467–489,
EMS Ser. Congr. Rep., Eur. Math. Soc., Zürich, 2008.


\bibitem{Keller05}
Bernhard Keller, \emph{{On triangulated orbit categories}}, Doc. Math. \textbf{10}
  (2005), 551--581.


\bibitem{Ringel97thepreprojective}Claus Michael Ringel,
    \emph{The Preprojective Algebra of a Quiver}, 1997.




\end{thebibliography}
\end{document}